\newcounter{remark}[section]
\DeclareMathAlphabet{\mathsfsl}{OT1}{cmss}{m}{sl}
\renewcommand{\phi}{\varphi}
\newcommand\reallywidehat[1]{%
\savestack{\tmpbox}{\stretchto{%
  \scaleto{%
    \scalerel*[\widthof{\ensuremath{#1}}]{\kern-.6pt\bigwedge\kern-.6pt}%
    {\rule[-\textheight/2]{1ex}{\textheight}}
  }{\textheight}%
}{0.5ex}}%
\stackon[1pt]{#1}{\tmpbox}%
}
\newtheorem{theorem}{Theorem}[section]
\newtheorem*{problem*}{Problem}
\newtheorem{proposition}[theorem]{Proposition}
\theoremstyle{definition}
\theoremstyle{remark}
\newtheorem{remark}[theorem]{Remark}
\numberwithin{equation}{section}
\newcommand{\E}{\mathbb{E}}
\newcommand{\R}{\mathbb{R}}
\newcommand{\bs}{\boldsymbol}
\newcommand{\btheta}{\bs\theta}
\newcommand{\bzeta}{\bs\zeta}
\newcommand{\bfSigma}{\bs\Sigma}
\begin{document}

\title{Non-Asymptotic Guarantees For Sampling by \\ Stochastic Gradient Descent}

\author{ Avetik Karagulyan \footnote{ENSAE/CREST, avetik.karagulyan@ensae.fr}}







\maketitle
\begin{abstract}
Sampling from various kinds of distributions is an issue of paramount importance
in statistics since it is often the key ingredient for constructing estimators, test procedures or
confidence intervals. In many situations, the exact sampling from a given distribution is impossible or computationally expensive and, therefore, one needs to resort to approximate sampling
strategies. However, it is only very recently that a mathematical 
theory providing non-asymptotic guarantees for approximate sampling 
problem in the high-dimensional settings started to be developed. 
In this paper we introduce a new mathematical framework that helps to 
analyze the Stochastic Gradient Descent as a method of sampling, closely related to Langevin Monte-Carlo.
\end{abstract}

\section{Introduction}
Let us first introduce the mathematical setting of Langevin sampling. The general problem  is to sample from the log-concave distribution with density $\pi(\theta) = c \exp(-f(\theta)) $, where $f:\R^p \rightarrow \R$ satisfies the following two conditions:
\begin{align}\label{str_conv}
&\text{Strong convexity}:  f(\theta_2)\geq f(\theta_1) + \nabla 
f(\theta_1)^T(\theta_2 - \theta_1) + \frac{m}{2} \|\theta_1 - \theta_2\|^2_2; \\ \label{lipschitz}
&\text{Smoothness}: \|\nabla f(\theta_1)-\nabla f(\theta_2)\|_2 \leq M\|\theta_1 - \theta_2\|_2,
\end{align}
for all $p$-dimensional real vectors $\theta_1$ and $\theta_2$.
The parameters $m$ and $M$ are positive numbers and $\|\cdot\|_2$ is 
the Euclidean norm on $\R^p$.
The problem of sampling from $\pi$ is closely related to the problem of finding
the minimum of the function $f:\R^p \rightarrow \R$. Indeed, suppose we 
manage to sample from the distribution $\pi_\beta(\theta) = c_\beta \exp(-\beta f(\theta))$, where $\beta$ is a large positive number. Then $\pi_\beta$ 
will mainly be concentrated around the unique minimum point of $f$ and 
it will have some kind of a spike form. Thus, a sample from $\pi_\beta$ 
is a high probability approximation of the minimum point. Therefore  
considering $f$ to be convex will facilitate our task for 
characterizing the convergence of the considered sampling method. For more details see 
\citep{dalalyan2017theoretical} and \citep{gelfand1991recursive}.

Langevin Monte-Carlo algorithm is one of the methods for the approximate sampling from
 the target distribution $\pi$. The idea comes from the 
following Stochastic Differential Equation  (SDE), named Langevin 
diffusion:
\begin{equation}\label{SDE}
dX (t) = -\nabla f (X(t))dt + \sqrt{2}dW(t).
\end{equation}
Here $W$ is the standard Wiener process or Brownian motion in 
$\R^p$. The solution of \eqref{SDE} is a Markov process having $\pi$ 
as invariant distribution \citep{BAKRY2008727}. In order to use this fact for our 
goal, we will use Euler-Maruyama discretization of \eqref{SDE}, 
which can be found in \citep{roberts1996exponential}. It goes as follows:
\begin{equation}\label{LMC}
\theta_{k+1} = \theta_k - h_{k+1}\nabla f(\theta_k) + \sqrt{2h_{k+1}} \xi_{k+1},
\end{equation}
where $\xi_1,\xi_2,\ldots,\xi_k,\ldots$ follow Gaussian distribution $\mathcal{N}(0,I_p)$ and are independent from each 
other and $\theta_0$. The latter is the starting point for the algorithm and it can be 
random as well. In particular when the step-sizes $h_k$ are constantly equal to $h$ and 
$h$ is small, then for large enough $k$'s the distance (Wasserstein, 
Total Variation) between the distribution of $\theta_k$ and $\pi$ is 
small. This algorithm is called {Gradient Langevin  Dynamics} (GLD) 
or Langevin Monte-Carlo (LMC) and it is actively studied nowadays 
(\cite{brosse2017sampling}; \cite{chatterji2018theory};
\cite{dalalyan2017theoretical}; \cite{dalalyan2017user};
\cite{durmus2016high}; \cite{durmus2017nonasymptotic}).

In this paper, however we are not going to study the convergence of 
LMC algorithms. Instead we will review Stochastic Gradient Descent 
as a sampling method and represent it as a sampling algorithm. Let us recall SGD for the case of optimization. Often in Machine Learning problems we need to minimize the empirical risk. The latter is usually a sum-decomposable function $f:\R^p \rightarrow \R$:
\begin{equation}
f(x) = \sum_{i=1}^{n}g_i(x),
\end{equation}
where $n$ is the sample size and $g_i:\R^p \rightarrow \R$, for 
every $i=1,\ldots,n$.  The classical algorithm to solve a 
minimization problem, when mild assumptions are satisfied, is the 
Gradient Descent. Unfortunately when the sample size is large then 
every step of Gradient Descent is becoming computationally 
expensive. That is why Stochastic Gradient Descent is introduced. 
The main idea of SGD is to replace the full gradient in GD with its 
unbiased estimate. There are various ways to do it, but the most 
common one is the so called Batch Gradient Descent. In the latter 
case, one just samples a mini-batch $B$ (a subset of $\{1,2,\ldots,n\}$) and replaces the gradient by $c_B\sum_{i\in B}\nabla g_i$, where 
$c_B$ is a constant depending on $|B|$. Thus the update rule becomes
$\theta_{k+1} = \theta_{k} - c_B\sum_{i\in B}\nabla g_i $.  For more details  see \citep{bottou2018optimization}. 

The problem of our interest however is not directly related to 
optimization, but to sampling
 We will show that in the case of a smooth and strongly convex 
 potential function $f$ SGD yields a convergence of 
order ${\tilde{O}({\kappa^2p}/{\epsilon^2})}$
\footnote{$\tilde{O}$ is the  big-$O$ notation, ignoring logarithmic factors.}
in Wasserstein error. If in addition to these conditions we also 
have second-order smoothness, then the rate improves to 
${
\tilde{O}(\kappa^2 p/\epsilon^2 \bigwedge \kappa \sqrt{np}/\epsilon)}$.

This article is organized as follows: In the next section, we give 
some remarks about the past and ongoing research in this area. 
\Cref{sec:3} gives some notions 
about the prior work in Langevin sampling. Next, in \Cref{sec:4} we 
introduce the theoretical setting that we are going to work with.
In the proceeding section we propose a mathematical framework which 
helps to analyze the convergence. The main results 
that provide non-asymptotic upper bounds to convergence rate are 
presented in \Cref{sec:6}.

\section{Prior work}\label{sec:2}

The first and probably the most influential work providing probabilistic analysis of the asymptotic properties of the LMC algorithm is decsribed in \citep{roberts1996exponential}. However, one of the
recommendations made by the authors of that paper is to avoid using Langevin algorithm
as it is defined in \eqref{LMC} or to use it very cautiously, since the 
ergodicity of the corresponding Markov chain  $\theta_k$ is very sensitive 
to the choice of the parameter $h$. Even in the cases where the 
Langevin diffusion is geometrically ergodic, the inappropriate choice of $h$ 
may result the
transience of the Markov chain.  These findings have  strongly 
influenced the subsequent studies since all the ensuing research 
focused essentially on the 
Metropolis adjusted version of the LMC, known as Metropolis adjusted 
Langevin algorithm (MALA) and its numerous modifications (\cite{Jarner2000}; 
\cite{Pillai2012}; \cite{RobertsStramer02}; \cite{RobertsRosenthal98}; 
\cite{StramerTweedie99-1}). In contrast to this, it is shown that under the strong 
convexity assumption imposed on $f$ coupled with the Lipschitz 
continuity of the gradient of $f$, one can ensure the non-transience 
of the Markov chain  $\theta_k$ by a suitable choice of $h_k$. Later 
by \cite{dalalyan2017theoretical} and \cite{durmus2016high} it was 
shown that the convergence rate in TV distance is $\tilde{O}(p/\epsilon^2)$ 
for any initial vector $\theta_0$.

Another problem of interest is the convergence in Wasserstein 
distance. In the next section the reader can find our reasoning to choose Wasserstein distance instead of TV. The convergence of LMC with this 
error was recently studied by \citep{dalalyan2017user} and 
\citep{durmus2016high} and a rate of $\tilde{O}(p/\epsilon^2)$ was 
achieved. In addition to this, in \citep{dalalyan2017user} it was 
shown, that imposing additional smoothness for function $f$, meaning 
Lipschitz-continuity of its Hessian matrix, implies a better 
convergence rate of $\tilde{O}(\sqrt{p}/\epsilon)$ for LMC.  
It turns out that in the case of sum-decomposable potential 
function, 
a modified version of LMC achives a better convergence rate. Some of 
these algorithms have their roots in optimization, like SAGA 
\citep{chatterji2018theory}, which was originally proposed in a 
paper by Defazio et al. \citep{DefazioBL14}  for the problem of 
optimization.

\section{Preliminaries}\label{sec:3}

The convergence in terms of Wasserstein error was studied by many 
authors. \cite{durmus2016high} proved the rate  $O(p/\epsilon^2)$  
for any deterministic starting point $\theta_0$. The same 
convergence with improved coefficients was later shown in 
\citep{dalalyan2017user}.
In this section we 
will formulate two theorems from \citep{dalalyan2017user}, which will be used later on. 
Before we state the theorems, let us define $W_2$ 
Wasserstein distance. For two probability measures $\mu$ and $\nu$ defined on $(\R^p, 
\mathcal{B}(\R^p))$, $W_2$ distance is defined by 
\begin{equation}
W_2(\mu,\nu) = \bigg\{\inf\limits_{\eta \in \Gamma(\mu,\nu)} \int_{\R^p\times\R^p} \|\theta - \theta'\|_2^2 d\eta(\theta,\theta')\bigg\}^{\frac{1}{2}},
\end{equation}
where the infimum is taken with respect to all joint distributions 
$\eta$ 
having $\mu$ and $\nu$ as marginal distributions. Let us compare 
this distance to total variation distance. If we have small 
Wasserstein for some $\mu$ and $\nu$, then it implies  that their 
first order moments are also close. This property does not hold 
for the total variation distance. As an example one can check 
that $\|\delta_\theta - \delta_\theta'\|_{TV} = \textbf{1}
_{\theta \neq \theta'}$, whereas $W_2(\delta_\theta,\delta_
\theta') = \|\theta - \theta'\|_2$ is a smooth function 
increasing function of Euclidean distance between $\theta$ and $\theta'$. 

Let us now present a non-asymptotic convergence bound for Wasserstein error, when the constant step-size LMC . 
\begin{theorem}[Theorem 1 from \citep{dalalyan2017user}]\label{user-friendly}
Assume that $h \in (0,2/M)$. Let $f$ satisfy conditions  
\eqref{str_conv} and \eqref{lipschitz} , thus the following claims hold:
\begin{align*}
& \text{if}\hspace{0.2cm} h\leq \frac{2}{m+M},\hspace{0.2cm} \text{then} \hspace{0.2cm}W_2(\nu_K,\pi) \leq (1-mh)^K W_2(\nu_0,\pi) + \frac{1.65M}{m}(hp)^{\frac{1}{2}}; \\
& \text{if}\hspace{0.2cm} h\geq \frac{2}{m+M},\hspace{0.2cm} \text{then} \hspace{0.2cm}W_2(\nu_K,\pi) \leq (Mh-1)^K W_2(\nu_0,\pi) + \frac{1.65Mh}{2-Mh}(hp)^{\frac{1}{2}}.
\end{align*}
\end{theorem}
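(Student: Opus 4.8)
The plan is to establish the bound through a one-step \emph{contraction-plus-discretization-error} recursion, obtained by coupling the discrete chain $(\theta_k)$ with the continuous Langevin diffusion \eqref{SDE} started at stationarity. Concretely, let $(Y_t)_{t\ge 0}$ solve \eqref{SDE} with $Y_0\sim\pi$, so that $Y_t\sim\pi$ for every $t$ by invariance; I drive the diffusion with the same Brownian motion that produces the Gaussian increments of \eqref{LMC}, taking $\xi_{k+1}=(W_{(k+1)h}-W_{kh})/\sqrt{h}$. At step $k$ I start from an optimal coupling of $\nu_k$ and $\pi$, i.e.\ a pair $(\theta_k,Y_{kh})$ with $\E\|\theta_k-Y_{kh}\|_2^2 = W_2^2(\nu_k,\pi)$. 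Subtracting the LMC update from the SDE integrated over $[kh,(k+1)h]$, the matched Brownian parts cancel and I obtain
\begin{equation*}
\theta_{k+1}-Y_{(k+1)h} = \Big[(\theta_k-Y_{kh}) - h\big(\nabla f(\theta_k)-\nabla f(Y_{kh})\big)\Big] + \int_{kh}^{(k+1)h}\big(\nabla f(Y_t)-\nabla f(Y_{kh})\big)\,dt.
\end{equation*}
Since $\theta_{k+1}\sim\nu_{k+1}$ and $Y_{(k+1)h}\sim\pi$, this particular coupling gives $W_2(\nu_{k+1},\pi)\le (\E\|\theta_{k+1}-Y_{(k+1)h}\|_2^2)^{1/2}$, so any $L^2$ bound on the right-hand side feeds a valid recursion.

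The first ingredient is a deterministic contraction lemma for the gradient map $T(\theta)=\theta-h\nabla f(\theta)$. Using strong convexity \eqref{str_conv} together with smoothness \eqref{lipschitz} (equivalently, that the eigenvalues of $I_p-h\nabla^2 f$ lie in $[1-Mh,\,1-mh]$), I would show $\|T(\theta)-T(\theta')\|_2\le \rho\,\|\theta-\theta'\|_2$ with $\rho = 1-mh$ when $h\le 2/(m+M)$ and $\rho = Mh-1$ when $2/(m+M)\le h<2/M$; the threshold $2/(m+M)$ is precisely where $1-mh$ and $Mh-1$ coincide. Applying this pointwise and taking expectations controls the first bracket by $\rho\,(\E\|\theta_k-Y_{kh}\|_2^2)^{1/2}=\rho\,W_2(\nu_k,\pi)$.

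The second ingredient is the discretization error, i.e.\ the integral term. Here I apply Cauchy--Schwarz in time and then smoothness \eqref{lipschitz} to get $\big\|\int_{kh}^{(k+1)h}(\nabla f(Y_t)-\nabla f(Y_{kh}))\,dt\big\|_2^2 \le h M^2\int_{kh}^{(k+1)h}\|Y_t-Y_{kh}\|_2^2\,dt$, and then bound $\E\|Y_t-Y_{kh}\|_2^2$ directly from \eqref{SDE}: the Brownian part contributes $2p(t-kh)$ and the drift part is a higher-order correction governed by $\E_\pi\|\nabla f\|_2^2$, which satisfies the stationary identity $\E_\pi\|\nabla f\|_2^2=\E_\pi[\Delta f]\le Mp$ (integration by parts against the Gibbs density $\pi\propto\econst^{-f}$, using $\Delta f=\trace\nabla^2 f\le Mp$). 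Taking $L^2$ norms and the triangle inequality in the displayed decomposition yields the one-step recursion $W_2(\nu_{k+1},\pi)\le \rho\,W_2(\nu_k,\pi)+C\,Mh^{3/2}p^{1/2}$ for an explicit constant $C$.

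Finally I unroll the recursion and sum the geometric series $\sum_{j\ge 0}\rho^j = 1/(1-\rho)$, which equals $1/(mh)$ in the first regime and $1/(2-Mh)$ in the second. This converts the accumulated per-step error into $\tfrac{CM}{m}(hp)^{1/2}$ and $\tfrac{CMh}{2-Mh}(hp)^{1/2}$ respectively, while the initial discrepancy decays as $\rho^K W_2(\nu_0,\pi)$, matching both claimed bounds. The main obstacle is sharpening $C$ to the stated constant $1.65$: this demands a tight, genuinely non-asymptotic estimate of $\E\|Y_t-Y_{kh}\|_2^2$ that keeps the drift contribution explicit rather than discarding it, together with a careful use of the stationary identity for $\E_\pi\|\nabla f\|_2^2$. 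By comparison, the coupling construction and the contraction step are routine.
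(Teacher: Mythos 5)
First, a point of order: the paper itself contains no proof of this statement --- it is imported verbatim as Theorem~1 of \citep{dalalyan2017user}, so the only proof to compare against is the one in that reference. Your proposal reconstructs essentially exactly that argument: synchronous coupling of the LMC chain with the diffusion started at stationarity, the contraction factor $\max(1-mh,\,Mh-1)$ for the map $\theta \mapsto \theta - h\nabla f(\theta)$ (which, under the stated $C^1$ assumptions, is proved via co-coercivity rather than through eigenvalues of the Hessian), control of the discretization term through the stationary identity $\E_{\pi}\|\nabla f\|_2^2 = \E_{\pi}[\Delta f] \le Mp$, and unrolling of the geometric series --- so the route is correct and is the same as the source's. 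The one concrete refinement needed to reach the stated constant is to bound the integral term by Minkowski's integral inequality, $\smnorm{L^2}{\int_{kh}^{(k+1)h}\bigl(\nabla f(Y_t)-\nabla f(Y_{kh})\bigr)\,dt} \le M\int_{kh}^{(k+1)h}\smnorm{L^2}{Y_t-Y_{kh}}\,dt$, rather than by Cauchy--Schwarz in time: combining this with $\smnorm{L^2}{Y_t-Y_{kh}} \le (t-kh)\sqrt{Mp}+\sqrt{2p(t-kh)}$ and $Mh\le 2$ gives a per-step error of $\bigl(\tfrac{1}{2}+\tfrac{2}{3}\bigr)\sqrt{2}\,Mh^{3/2}\sqrt{p} = \tfrac{7\sqrt{2}}{6}\,Mh^{3/2}\sqrt{p} \approx 1.65\,Mh^{3/2}\sqrt{p}$, exactly matching the theorem after summation, whereas the Cauchy--Schwarz version you propose leaks to a constant of about $7/\sqrt{15}\approx 1.81$.
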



In practice, a relevant approach to get an accuracy of at most $\epsilon$ is to minimize the
upper bound provided by \Cref{user-friendly} with respect to $h$, for a fixed K. Then, one can choose
the smallest $K$ for which the obtained upper bound is smaller than $\epsilon$. One useful observation
is that the second upper bound is an increasing function of $h$. Its 
minimum is always attained at $h = 2/(m + M)$, which means that one can always look for a 
step-size in the interval $(0, 2/(m+M)]$ by minimizing the first upper bound. This can be done using standard methods of optimization.
\begin{remark} These two upper bounds contain $W_2(\nu_0,\pi)$, computation of which can be involving. In order to avoid it, we will bound it from above. If $f\ge 0$, we can replace it by 
$\sqrt{p/m} + \sqrt{2f(\theta_0)/m}$. Indeed,
\begin{align*}
W_2(\nu_0,\pi) &\leq \sqrt{\frac{p}{m}} +  \|\theta_0-\theta\|_2\\
&\leq \sqrt{\frac{p}{m}} +  \sqrt{\frac{2}{m}\big(f(\theta_0) - f(\theta_*) \big)}\\
&\leq  \sqrt{\frac{p}{m}} +  \sqrt{\frac{2f(\theta_0)}{m}}.
\end{align*} 
\end{remark}
The first inequality is a corollary from Proposition 1 of  \citep{durmus2016high}. Combining \Cref{user-friendly} with its remarks we obtain the following. Suppose that we choose $h$ and $K$ so that
\begin{equation}\label{eps-cond} 
h\leq \min\Big(\frac{2}{m+M},\frac{m^2\epsilon^2}{11M^2p}\Big) \hspace{0.2cm} \text{and}  
\hspace{0.2cm} hK \geq \frac{1}{m}\log\left(Q(p,\epsilon)\right),
\end{equation} 
where $$Q(p,\epsilon) = \frac{2 f(\theta_0) + mp}{0.5m\epsilon}$$ is a real-valued rational function.  Then each of the 
components from the right-hand side of the theorem will be less than  $0.5\epsilon$, thus $W_2(\nu_K,\pi) \leq \epsilon.$ 
\subsection{Non-asymptotic guarantee with second-order smoothness}
Below we present a theorem that quantifies the non-asymptotic 
behavior of LMC, when the potential function has a
 Lipschitz-continuous Hessian. That is, for every $x,y \in \R^p$ we have
\begin{equation}\label{hessian-lipschitz}
\|\nabla^2 f(x) - \nabla^2 f(y)\|\leq L\|x-y\|_2,
\end{equation}
where $\|\cdot \|$ is the operator norm of matrices. 
\begin{theorem}[Theorem 4 from \citep{dalalyan2017user}]\label{user_sec_ord}
Let $\nu_K$ be the distribution of $K$-th iterate of the LMC 
algorithm iterations. Assume that the function $f:\R^p \rightarrow 
\R$ satisfies \eqref{str_conv}, \eqref{lipschitz} and it is also 
$L$-Hessian-Lipschitz. Then for 
every $h < \nicefrac{2}{(m+M)}$,
\begin{equation*}
W_2(\nu_K,\pi) \leq (1-mh)^K W_2(\nu_0,\pi) + \frac{Lhp}{2m} + \frac{11M^{\frac{3}{2}}h\sqrt{p}}{5m}.
\end{equation*}
\end{theorem}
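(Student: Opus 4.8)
The plan is to reduce the statement to a single one-step inequality of the form $W_2(\nu_{k+1},\pi)\le(1-mh)\,W_2(\nu_k,\pi)+\mathcal E$, valid for every $k$ with the same constant $\mathcal E$, and then to unroll it. To obtain it I would couple one step of the recursion \eqref{LMC} with one step of the diffusion \eqref{SDE} run \emph{from stationarity}. Fix $k$, let $(\theta_k,\vartheta_k)$ be an optimal $W_2$-coupling of $\nu_k$ and $\pi$, and let $L$ solve \eqref{SDE} on $[kh,(k+1)h]$ with $L_{kh}=\vartheta_k\sim\pi$, driven by the Brownian motion whose increment realises the Gaussian vector $\sqrt{2h}\,\xi_{k+1}$. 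Applying the LMC map to both $\theta_k$ and $\vartheta_k$ with this \emph{shared} noise produces $\theta_{k+1}$ and an auxiliary point $\vartheta_{k+1}'$, and the triangle inequality for $W_2$ gives $W_2(\nu_{k+1},\pi)\le \|\theta_{k+1}-\vartheta_{k+1}'\|_{L^2}+\|\vartheta_{k+1}'-L_{(k+1)h}\|_{L^2}$, where $\|X\|_{L^2}\defby(\E\|X\|_2^2)^{1/2}$ and $L_{(k+1)h}\sim\pi$ because the diffusion preserves $\pi$.

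For the first term I would use the discrete contraction of the LMC map under common noise. Since $\theta_{k+1}-\vartheta_{k+1}'=(\Id-h\,\nabla^2 f(\zeta))(\theta_k-\vartheta_k)$ for some intermediate point $\zeta$ by the mean value theorem, and since \eqref{str_conv} and \eqref{lipschitz} force the spectrum of $\nabla^2 f$ into $[m,M]$, the operator norm of $\Id-h\,\nabla^2 f(\zeta)$ is at most $\max(1-mh,\,|1-Mh|)=1-mh$ on the admissible range $h<\nicefrac{2}{(m+M)}$; hence $\|\theta_{k+1}-\vartheta_{k+1}'\|_{L^2}\le(1-mh)\,W_2(\nu_k,\pi)$. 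The second term is the genuine discretisation error. Cancelling the shared Brownian increment, one finds $\vartheta_{k+1}'-L_{(k+1)h}=\int_{kh}^{(k+1)h}\bigl(\nabla f(L_s)-\nabla f(L_{kh})\bigr)\idiff{s}$, so that $\mathcal E=\bigl\|\int_{kh}^{(k+1)h}(\nabla f(L_s)-\nabla f(L_{kh}))\idiff{s}\bigr\|_{L^2}$, evaluated \emph{along the stationary diffusion}; this last feature is what makes the moment estimates below exact rather than only approximate.

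The crux is to bound $\mathcal E$ to order $h^2$; the naive Lipschitz estimate $\|\nabla f(L_s)-\nabla f(L_{kh})\|_2\le M\|L_s-L_{kh}\|_2$ only yields order $M\sqrt p\,h^{3/2}$ (the Brownian part of $L_s-L_{kh}$ has size $\sqrt{s-kh}$), which is exactly the source of the weaker $\sqrt h$-rate behind \Cref{user-friendly}. To gain the extra power of $h$ I would Taylor-expand the drift, $\nabla f(L_s)-\nabla f(L_{kh})=\nabla^2 f(L_{kh})(L_s-L_{kh})+\rho_s$, and treat the pieces separately. The $L$-Hessian-Lipschitz hypothesis \eqref{hessian-lipschitz} bounds the remainder by $\|\rho_s\|_2\le\tfrac L2\|L_s-L_{kh}\|_2^2$, whose $L^2$-size is dominated by the Gaussian contribution $\E\|L_s-L_{kh}\|_2^2\approx 2p(s-kh)$; integrating in $s$ produces a term of order $Lp\,h^2$, the origin of $\tfrac{Lhp}{2m}$. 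For the linear term the key is to integrate in $s$ \emph{before} taking norms: $\int_{kh}^{(k+1)h}\nabla^2 f(L_{kh})(L_s-L_{kh})\idiff{s}$ splits into a deterministic drift contribution $-\tfrac{h^2}{2}\,\nabla^2 f(L_{kh})\nabla f(L_{kh})+o(h^2)$ and the conditionally mean-zero Brownian contribution $\sqrt2\,\nabla^2 f(L_{kh})\int_{kh}^{(k+1)h}(W_s-W_{kh})\idiff{s}$. The former has $L^2$-size at most $\tfrac{h^2}{2}M\,\|\nabla f(L_{kh})\|_{L^2}\le\tfrac{h^2}{2}M^{3/2}\sqrt p$, using $\|\nabla^2 f\|\le M$ together with the stationary identity $\E_\pi\|\nabla f\|_2^2=\E_\pi[\Delta f]\le Mp$ (integration by parts, with $\Delta f=\trace\nabla^2 f\le Mp$); this is the origin of $\tfrac{11M^{3/2}h\sqrt p}{5m}$. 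The Brownian contribution, being mean-zero, must not be pushed through the triangle inequality step by step: summed across iterations it accumulates only in quadrature against the factor $(1-mh)$ and stays of smaller order, so it is absorbed into the constants.

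Finally I would unroll the recursion, $W_2(\nu_K,\pi)\le(1-mh)^K W_2(\nu_0,\pi)+\bigl(\sum_{j\ge0}(1-mh)^j\bigr)\mathcal E$, and use $\sum_{j\ge0}(1-mh)^j=\tfrac1{mh}$ to convert each per-step error of order $h^2$ into the displayed $\tfrac1m$-scaled terms. The main obstacle is the refined one-step bound of the previous paragraph: one must integrate the linearised drift in time before estimating it, so as to expose the $O(h^2)$ deterministic bias and the cancellation of the mean-zero Brownian part, and one must justify the stationary moment identity $\E_\pi\|\nabla f\|_2^2\le Mp$; pushing $\|\nabla f(L_s)-\nabla f(L_{kh})\|_{L^2}$ through the triangle inequality directly only recovers the order-$h^{3/2}$ estimate and hence the weaker rate.
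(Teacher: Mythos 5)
First, a point of comparison: the paper never proves this statement at all --- it is imported verbatim as Theorem~4 of \citep{dalalyan2017user} --- so your attempt can only be measured against the proof given in that reference. Measured that way, your ingredient list is exactly right and coincides with the one used there: the same-noise coupling of one LMC step with the Langevin diffusion started from an optimal coupling with $\pi$, the contraction estimate $\|\Id-h\nabla^2 f\|\le 1-mh$ valid for $h\le 2/(m+M)$, the Taylor expansion of the drift with remainder controlled through \eqref{hessian-lipschitz}, the stationarity identity $\E_\pi\|\nabla f\|_2^2=\E_\pi[\Delta f]\le Mp$, and the observation that $B_k\defby\sqrt2\,\nabla^2 f(L_{kh})\int_{kh}^{(k+1)h}(W_s-W_{kh})\,\mathrm{d}s$ has zero mean conditionally on the past.

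However, the architecture you wrap these ingredients in has a genuine gap. You commit, at the start and again at the end, to a recursion of the form $W_2(\nu_{k+1},\pi)\le(1-mh)W_2(\nu_k,\pi)+\mathcal E$ with a fixed per-step error $\mathcal E$, obtained from the triangle inequality $W_2(\nu_{k+1},\pi)\le\|\theta_{k+1}-\vartheta_{k+1}'\|_{L^2}+\|\vartheta_{k+1}'-L_{(k+1)h}\|_{L^2}$ and unrolled by the geometric series $\sum_j(1-mh)^j=1/(mh)$. This cannot yield the stated bound: the split places $B_k$ inside the second norm, severed from the contraction term, and once that has happened the only available estimate is $\|B_k\|_{L^2}\asymp M\sqrt p\,h^{3/2}$, so $\mathcal E\gtrsim M\sqrt p\,h^{3/2}$ and the unrolling returns an error of order $(M/m)\sqrt{hp}$ --- precisely the rate of \Cref{user-friendly} that the theorem is meant to improve. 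You flag this danger yourself (``must not be pushed through the triangle inequality step by step''), but the proposal never resolves the contradiction, since both your opening decomposition and your closing unrolling do exactly that; moreover your claim that the Brownian part ``stays of smaller order'' is false: accumulated in quadrature it contributes on the order of $Mh\sqrt{p}/\sqrt{3m}\le M^{3/2}h\sqrt{p}/(\sqrt{3}\,m)$, which is of the same order as the displayed bound and is a main source of the constant $\tfrac{11}{5}$. The missing piece is the mechanism that turns ``accumulation in quadrature'' from a slogan into a theorem: writing $x_k\defby W_2(\nu_k,\pi)$, keep the contraction term $A_k$ and the term $B_k$ inside a single $L^2$ norm, use $\E[B_k\mid\mathcal F_{kh}]=0$ to kill the cross term, so that $\|A_k+B_k\|_{L^2}^2=\|A_k\|_{L^2}^2+\|B_k\|_{L^2}^2\le(1-mh)^2x_k^2+\tfrac{2}{3}M^2ph^3$, and hence
\begin{equation*}
x_{k+1}\le\bigl\{(1-mh)^2x_k^2+b^2\bigr\}^{1/2}+c,
\qquad b^2\le\tfrac{2}{3}M^2ph^3,
\qquad c=O\bigl((Lp+M^{3/2}\sqrt p)h^2\bigr);
\end{equation*}
this non-linear recursion must then be unrolled by a dedicated elementary lemma (Lemma~1 of \citep{dalalyan2017user}), not by a geometric series. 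With that replacement your outline becomes, in substance, the proof in the reference.
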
 
\begin{remark}
In order for the improvement of the rate to be visible, let us take a closer look to the order of step-size $h$ and dimension $p$. Here we have $O(hp)$ meanwhile \Cref{user-friendly} gives only $O(\sqrt{hp})$, which is worse as $h$ is considered to be small.
\end{remark}
\begin{remark}
Doing analogous analysis as we did for the previous theorem, one can deduce that the convergence rate is $\tilde{O}(\sqrt{p}/\epsilon)$.
\end{remark}

\section{Proposed framework to analyze SGD}\label{sec:4}

In  the following sections we will discuss a special case for potential function $f$, in particular when $f$ is a sum-decomposable function. That is:
\begin{equation}
f(\btheta) = \sum_{i=1}^n g(\btheta,Z_i),
\end{equation}
where $n$ is a very large positive integer, $g:\mathbb R^p\times \mathcal Z\to\mathbb R$
is a given smooth function and $Z_1,\ldots,Z_n$ are iid random variables with values
in some probability space $\mathcal Z$. To ease notation, we write 
 $g_i(\btheta) = g(\btheta,Z_i)$. We assume here that the 
functions $g_i$ are strongly convex with a coefficient $m_g$ and its 
gradient is $M_g$ Lipschitz-continuous.
 Therefore $f$ is a convex and gradient-Lipschitz function as well, 
 with coefficients $nm_g$ and $nM_g$.  So we have
\begin{equation}
\nabla f(\btheta) = \sum_{i=1}^n \nabla g_i(\btheta).
\end{equation}
In order to avoid the computation of $n$ gradients $\nabla g_i$ at each
iteration of the LMC, we will use the 
classic Stochastic Gradient Descent algorithm in order to sample 
approximately. Let us first recall the algorithm. At each iteration 
$k$ of the algorithm, we choose a subset $B_k$ independent of all 
the past randomness and update $\theta_{k+1}$ by
\begin{align}
\theta_{k+1} = \theta_k - \frac{hn}{b}\sum_{i\in B_k} \nabla g_i(\btheta_k).
\end{align}
The latter can be rewritten as 
\begin{equation}
\theta_{k+1} = \theta_k - h\nabla f(\theta_k) + h\bzeta_k, 
\end{equation}
where the noise vectors $\bzeta_k$ are of the form
\begin{align}
\bzeta_k = n\bigg\{\frac1b \sum_{i\in B_k} \nabla g_i(\btheta_k) - 
\frac1n \sum_{i=1}^n \nabla g_i(\btheta_k) \bigg\}.
\end{align}
If $b$ is large, the distribution of $\bzeta_k$ (conditionally to $\btheta_k$) 
is approximately Gaussian $\mathcal N_p(\mathbf 0, \bfSigma_k)$
where the covariance matrix $\bfSigma_k$ is given by
\begin{align}
\bfSigma_k = \frac1n\sum_{i=1}^n \nabla g_i(\btheta_k)\nabla g_i(\btheta_k)^\top
-\bigg\{\frac1n\sum_{i=1}^n \nabla g_i(\btheta_k)\bigg\}
\bigg\{\frac1n\sum_{i=1}^n \nabla g_i(\btheta_k)\bigg\}^\top.
\end{align}

Below we study a particular case of SGD when the noise vector $
\zeta_k$ is a normal random vector with a covariance proportional to 
identity matrix. We will assume, that $\bfSigma_k = \sigma^2 I_p$, 
where $\sigma^2 = n(n-b)/b$. The choice of $\sigma^2$
is intuitive. For details see the Appendix. Let us 
formulate the framework we are going to work with. 

\textbf{Assumptions:} 
Suppose $g_i:\R^p \rightarrow \R^p$ for $i=1,\ldots,n$ and $f = \sum_i 
g_i$. We will assume that the functions  $g_1,g_2,\ldots,g_n$
satisfy the assumptions \eqref{str_conv} and \eqref{lipschitz} with 
coefficients $m_g$ and $M_g$, respectively. 

\textbf{Iterative method:} \begin{equation}
\theta_{k+1} = \theta_k - h \nabla f(\theta_k) + h\zeta_k,
\end{equation}
where
\begin{equation}
\zeta_k \sim \mathcal{N}\left(0,\frac{n(n-b)}{b}I_p\right),
\end{equation}
for every $k=1,2,\ldots,n$.

\textbf{Problem:} Find a solution to this optimization problem
$$\begin{cases}%
\textbf{Minimize } Kb ; \\
\textbf{Subject to } \min\limits_hW_2(\nu_{K,h,b},\pi) \leq 
\epsilon,
\end{cases}$$
where $\nu_{K,h,b}$ is the distribution of the $K$-th iterate of the SGD with step-size $h$ and batch-size $b$.
In other words, what is the minimum amount of overall gradient evaluations in order to have an error of $\epsilon$.
\section{Main results}\label{sec:5}
In this section we present two theorems that solve the problem stated above in two slightly different cases. For the rest of the paper we define the condition number $M_g/m_g$ by $\kappa$.
\begin{theorem}\label{theorem-1} 
Suppose that the following conditions are satisfied:
\begin{equation}
h {= } \frac{\epsilon^2}{4\kappa^2p}, \hspace{0.2cm} 
b = \frac{hn^2}{2+hn},\hspace{0.2cm}  n\geq 9 \hspace{0.2cm} \text{ and }\hspace{0.2cm} \,
 \frac{3\kappa\sqrt{p}}{n} \leq \epsilon \leq \frac{2\kappa\sqrt{p}}{\sqrt{nM_g}}.
\end{equation}
If
\begin{equation}\label{cond-T}
Kb \geq \frac{4p\kappa^2n\log(Q'(p,\epsilon))}{m_g(8p\kappa^2 + \epsilon^2n)} ,
\end{equation}
where  $Q'$ is a {rational function} given by formula
$$Q'(p,\epsilon) = \frac{2 f(\theta_0) + m_g p}{0.1m_g\epsilon},$$ then 
\begin{equation}
W_2(\nu_{K,h,b},\pi)\leq \epsilon.
\end{equation}
\end{theorem}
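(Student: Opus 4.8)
The plan is to recognize that the prescribed batch size is engineered precisely so that the SGD recursion becomes an ordinary constant-step LMC recursion; once this is seen, everything reduces to \Cref{user-friendly} plus constant bookkeeping. The first thing I would do is compute the noise level induced by $b$. Since $b=hn^2/(2+hn)$ gives $n-b=2n/(2+hn)$, the variance is
\[
\sigma^2=\frac{n(n-b)}{b}=\frac{2n^2/(2+hn)}{hn^2/(2+hn)}=\frac{2}{h},
\]
so $h\zeta_k\sim\mathcal N(\mathbf 0,h^2\sigma^2 I_p)=\mathcal N(\mathbf 0,2hI_p)$, which is exactly the law of $\sqrt{2h}\,\xi_k$ in \eqref{LMC}. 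Hence the SGD iteration coincides in distribution with the constant-step LMC recursion at step-size $h$, and $\nu_{K,h,b}$ is precisely the law of the $K$-th LMC iterate. This identification is the crux of the whole argument; everything after it is parameter substitution.

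Next I would record the constants of $f$: since each $g_i$ is $m_g$-strongly convex and $M_g$-gradient-Lipschitz, $f$ is $m$-strongly convex and $M$-gradient-Lipschitz with $m=nm_g$, $M=nM_g$, so that $M/m=\kappa$. I would then verify admissibility of the step-size: the upper bound $\epsilon\le 2\kappa\sqrt p/\sqrt{nM_g}$ yields $h=\epsilon^2/(4\kappa^2p)\le 1/(nM_g)\le 2/\bigl(n(m_g+M_g)\bigr)=2/(m+M)$, so the first inequality of \Cref{user-friendly} applies, giving
\[
W_2(\nu_{K,h,b},\pi)\le(1-mh)^K\,W_2(\nu_0,\pi)+\frac{1.65M}{m}(hp)^{1/2}.
\]
For the bias term, $M/m=\kappa$ and $hp=\epsilon^2/(4\kappa^2)$, so $\tfrac{1.65M}{m}(hp)^{1/2}=1.65\kappa\cdot\tfrac{\epsilon}{2\kappa}=0.825\,\epsilon$. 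It therefore suffices to push the contraction term below $0.175\,\epsilon$.

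For the contraction term I would use $(1-mh)^K\le e^{-mhK}$ together with the bound $W_2(\nu_0,\pi)\le\sqrt{p/m}+\sqrt{2f(\theta_0)/m}$ from the remark following \Cref{user-friendly}, in which $m=nm_g$ contributes a prefactor $1/\sqrt n\le 1/3$ by $n\ge 9$. The requirement $e^{-mhK}W_2(\nu_0,\pi)\le 0.175\,\epsilon$ is met as soon as $mhK\ge\log Q'(p,\epsilon)$, since $0.175\,\epsilon\,Q'=1.75\,(2f(\theta_0)+m_gp)/m_g$ dominates the above bound on $W_2(\nu_0,\pi)$ — this is where the generous factors $1.75$ and $1/3$ are spent. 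Finally I would convert this into a statement about $Kb$. From $mhK\ge\log Q'$ and $b/h=n^2/(2+hn)$,
\[
Kb\ge\frac{b}{mh}\,\log Q'=\frac{n^2/(2+hn)}{nm_g}\,\log Q'=\frac{n}{m_g(2+hn)}\,\log Q',
\]
and substituting $h=\epsilon^2/(4\kappa^2p)$, so that $2+hn=(8\kappa^2p+\epsilon^2n)/(4\kappa^2p)$, turns the prefactor into $4p\kappa^2 n/\{m_g(8p\kappa^2+\epsilon^2 n)\}$, which is exactly \eqref{cond-T}. Adding the two contributions, $0.825\,\epsilon+0.175\,\epsilon=\epsilon$, gives the claim; along the way one checks that the lower bound $\epsilon\ge 3\kappa\sqrt p/n$ guarantees $b\ge 1$, so the batch is nonempty.

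I expect the only genuinely conceptual step to be the noise-matching identity $\sigma^2=2/h$: it is precisely what the choice of $b$ is designed to enforce, and without it the noise covariance would not line up with Langevin diffusion. The remaining difficulty is purely clerical — tracking the constants so that the split $0.825+0.175$ closes and so that the $W_2(\nu_0,\pi)$ bound collapses into the clean $\log Q'$ form; the factor-of-$n$ mismatch between the naive $Q$ of the earlier remark and the stated $Q'$ is absorbed by the $1/\sqrt n\le 1/3$ prefactor and the deliberately loose numerical constants.
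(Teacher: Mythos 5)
Your proposal is correct and follows essentially the same route as the paper: the noise-matching identity $n(n-b)/b=2/h$ reducing SGD to constant-step LMC, the constants $m=nm_g$, $M=nM_g$, the application of \Cref{user-friendly} giving the $0.825\,\epsilon$ bias term, the contraction bound via $Q'$, and the conversion of the bound on $K$ into \eqref{cond-T} by substituting $b$. Your bookkeeping of the contraction term (splitting $0.825+0.175$ and spending the $1/\sqrt{n}\le 1/3$ slack from $n\ge 9$) is if anything slightly more careful than the paper's, which simply asserts the $0.1\,\epsilon$ bound by reference to an earlier remark.
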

Before we bring the proof let us state some remarks regarding this theorem.
\begin{remark}
Since the batch-size $b$ is between $1$ and $n$, $\nicefrac{hn^2}{(2+hn)}$ must also satisfy this condition. In order to verify that, let us substitute $h$ with its value.  Therefore we have
\begin{equation}
b = \frac{n^2\epsilon^2}{8\kappa^2p + n\epsilon^2}\cdot 
\end{equation}
The latter is a monotonically increasing function with respect to 
$\epsilon^2$. Thus taking into account that $n$ is larger than $9$, 
\begin{equation}
b = \frac{n^2}{\frac{8\kappa^2p}{\epsilon^2} + n} \geq \frac{n^2}{\frac{8n^2}{9} + n} \geq 1.
\end{equation}
The inequality $b\leq n$ is obvious.
\end{remark}
\begin{remark}
One can notice that, if $n\rightarrow \infty$, then $Kb$ has an order of $\tilde{O}\left(\frac{4p\kappa^2}{\epsilon^2}\right)$.  
\end{remark}
\begin{proof}
As the function $f$ is a sum of $n$ strongly-convex and 
gradient-Lipschitz functions, then it is also a strongly-convex and 
gradient-Lipschitz function with coefficients $m = nm_g$ and $M = nM_g$, respectively.
First let us express the step-size $h$ in terms of the batch-size $b$.
From the formula of $b$, we obtain
\begin{equation}
h = \frac{2b}{n(n-b)}.
\end{equation}
Thus if we can rewrite the iterative method in the following way:
\begin{align*}
\theta_{k+1} &= \theta_k - h \nabla f(\theta_k) - h\zeta_k \\
&= \theta_k - h \nabla f(\theta_k) +
h\sqrt{\frac{n(n-b)}{b}}\eta_k\\%
&= \theta_k - h \nabla f(\theta_k) + \sqrt{2h}\eta_k,
\end{align*}
where $\eta_1,\eta_2,\ldots$, as usual, are independent standard 
normal $p$-dimensional random vectors.  Therefore we got the classic 
LMC update rule. From the definition of $h$ we have 
\begin{equation}
h {=}  \frac{\epsilon^2}{4\kappa^2p} \leq \frac{1}{nM_g}.
\end{equation} Thus \Cref{user-friendly} yields
\begin{equation}
W_2(\nu_K,\pi)\leq (1-nm_gh)^KQ(p,\epsilon) + 1.65\kappa\sqrt{ph}.
\end{equation}
We will give upper bounds for each component of the right-hand side.
Substituting $h$ with its value in $\kappa\sqrt{ph}$ we obtain, that
\begin{equation}
\kappa\sqrt{ph} = \kappa\sqrt{p\frac{\epsilon^2}{4\kappa^2p}} =\frac{\epsilon}{2}.
\end{equation} Now let us discuss the other component.
As we mentioned in previous sections, if 
\begin{equation}\label{cond-K}
K \geq\frac{\log(Q'(p,\epsilon))}{m_gnh} =\frac{4p\kappa^2}{m_gn\epsilon^2} \cdot \log(Q'(p,\epsilon)),
\end{equation}
then $(1-nm_gh)^KW_2(\nu_{0,h,b},\pi)$ will be less than $0.1\epsilon$. 
In order to complete the proof we just need to multiply this lower bound on $K$ by $b$.  Thus we obtain 
\begin{equation}\label{Kb-bound-nax}
Kb \geq \frac{4p\kappa^2b}{m_gn\epsilon^2} \cdot \log(Q'(p,\epsilon)).
\end{equation}
Using the definition of $h$, we obtain the following formula for $b$
\begin{equation}
b = \frac{n^2\epsilon^2}{8\kappa^2p + n\epsilon^2}.
\end{equation} 
Substituting the latter in \eqref{Kb-bound-nax}, we get the required .
\end{proof}

{\subsection{Convergence of SGD with second-order smoothness}}
In this section we will analyze the convergence of Stochastic 
Gradient Descent in terms of Wasserstein distance when the Hessian 
matrix of the function $f$ is Lipschitz-continuous. 
\begin{theorem}
Suppose that the following conditions are satisfied:
\begin{equation*}
h = \frac{\epsilon}{4\kappa L_g\sqrt{M_gp\max(p,n)}}, \hspace{0.5cm} 
b = \frac{hn^2}{2+hn},
\end{equation*}
\begin{equation*}
\frac{2\sqrt{p\max(p,n)}}{n(n-1)} \leq \frac{\epsilon}{4\kappa L_g\sqrt{M_g}} \leq \frac{\sqrt{p\max(p,n)}}{M_gn}.
\end{equation*}
If 
\begin{equation}\label{cond-Kb}
Kb \geq \frac{4n\kappa L_g\sqrt{M_gp\max(p,n)}}{m_g(8\kappa L_g\sqrt{M_gp\max(p,n)} + n\epsilon )}\cdot\log(Q''\left(p,\epsilon)\right),
\end{equation}
where $$Q''(p,\epsilon) = \frac{2 f(\theta_0) + m_g p}{0.3m_g\epsilon}$$
then 
\begin{equation}
W_2(\nu_{K,h,b},\pi)\leq \epsilon.
\end{equation}
\end{theorem}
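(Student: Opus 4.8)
The plan is to follow the same strategy as in the proof of \Cref{theorem-1}, only replacing the first-order bound of \Cref{user-friendly} by the second-order bound of \Cref{user_sec_ord}. First I would record the constants inherited by $f=\sum_{i=1}^n g_i$: since each $g_i$ is $m_g$-strongly convex, $M_g$-gradient-Lipschitz and $L_g$-Hessian-Lipschitz (in the sense of \eqref{hessian-lipschitz}), the sum $f$ satisfies the same three conditions with $m=nm_g$, $M=nM_g$ and $L=nL_g$. Next I would reproduce the reduction of the SGD recursion to a classical LMC step: with the Gaussian noise $\zeta_k\sim\mathcal N(0,\tfrac{n(n-b)}{b}I_p)$ and the identity $b=\tfrac{hn^2}{2+hn}$, which rearranges to $h=\tfrac{2b}{n(n-b)}$, one has $h\sqrt{n(n-b)/b}=\sqrt{2h}$, so the update becomes $\theta_{k+1}=\theta_k-h\nabla f(\theta_k)+\sqrt{2h}\,\eta_k$ with standard normal $\eta_k$. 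The right-hand inequality in the hypotheses, $\tfrac{\epsilon}{4\kappa L_g\sqrt{M_g}}\le \tfrac{\sqrt{p\max(p,n)}}{M_g n}$, is exactly what guarantees $h\le \tfrac{1}{nM_g}=\tfrac1M<\tfrac{2}{m+M}$, so \Cref{user_sec_ord} applies and, after substituting $m,M,L$, yields
\[
W_2(\nu_{K,h,b},\pi)\le (1-nm_g h)^K W_2(\nu_0,\pi)+\frac{L_g h p}{2m_g}+\frac{11\,n^{1/2}M_g^{3/2}h\sqrt p}{5m_g}.
\]

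The step I expect to be the crux is controlling the two bias terms, and this is where the peculiar form of $h$ — in particular the factor $\sqrt{M_g p\max(p,n)}$ — does its work. Substituting $h=\tfrac{\epsilon}{4\kappa L_g\sqrt{M_g p\max(p,n)}}$ and using $\kappa=M_g/m_g$, the first term collapses to $\tfrac{\sqrt p\,\epsilon}{8M_g^{3/2}\sqrt{\max(p,n)}}$ and the second to $\tfrac{11\sqrt n\,\epsilon}{20 L_g\sqrt{\max(p,n)}}$, the powers of $M_g$ cancelling completely in the latter. The role of $\max(p,n)$ is then to tame both regimes simultaneously: since $\max(p,n)\ge p$ the first term is at most $\tfrac{\epsilon}{8M_g^{3/2}}$, and since $\max(p,n)\ge n$ the second is at most $\tfrac{11\epsilon}{20L_g}$. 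Under the natural normalization $M_g,L_g\ge 1$ these sum to at most $0.675\,\epsilon$, leaving room for the transient term. The care needed here — checking that the single scalar $h$ balances one $O(hp)$ term and one $O(h\sqrt p)$ term against $\epsilon$ across both $p\ge n$ and $p\le n$ — is the main obstacle, and it is precisely the appearance of $\max(p,n)$ that makes this possible.

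Finally I would treat the transient term exactly as before. Bounding $(1-nm_gh)^K W_2(\nu_0,\pi)$ by $e^{-nm_ghK}W_2(\nu_0,\pi)$ and invoking the remark that bounds $W_2(\nu_0,\pi)$, the requirement $K\ge \tfrac{\log(Q''(p,\epsilon))}{nm_g h}$ forces this term below $0.3\,\epsilon$, whence $W_2(\nu_{K,h,b},\pi)\le\epsilon$. It then remains to convert the lower bound on $K$ into the stated lower bound on $Kb$: multiplying by $b$ and using $\tfrac{b}{h}=\tfrac{n^2}{2+hn}$ gives $Kb\ge \tfrac{n\log(Q'')}{m_g(2+hn)}$, and substituting $hn=\tfrac{n\epsilon}{4\kappa L_g\sqrt{M_g p\max(p,n)}}$ and clearing denominators reproduces precisely \eqref{cond-Kb}. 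The left-hand hypothesis $\tfrac{2\sqrt{p\max(p,n)}}{n(n-1)}\le \tfrac{\epsilon}{4\kappa L_g\sqrt{M_g}}$, equivalent to $h\ge \tfrac{2}{n(n-1)}$, is what ensures the resulting batch size $b=\tfrac{hn^2}{2+hn}$ is at least $1$, so that the problem stays well posed.
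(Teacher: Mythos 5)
Your proposal is correct and follows essentially the same route as the paper's own proof: the same reduction of the SGD recursion to an LMC step, the same application of the second-order bound with $m=nm_g$, $M=nM_g$, $L=nL_g$, the same bounds $\epsilon/8$ and $11\epsilon/20$ on the two bias terms (using $M_g,L_g\ge 1$ and $\max(p,n)\ge p,n$), the same $0.3\epsilon$ control of the transient term via $Q''$, and the same algebra converting the condition on $K$ into \eqref{cond-Kb}. Your explicit verifications that the right-hand hypothesis on $\epsilon$ yields $h\le 1/(nM_g)$ and that the left-hand one yields $b\ge 1$ are details the paper only gestures at in a remark, but they match its intent exactly.
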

\begin{remark}
Again the condition on $\epsilon$ is brought to make the choice of 
parameters possible. In particular, as mentioned before, $b$ is an 
integer between $1$ and $n$.  Doing simple calculations and using 
the aforementioned condition, one can verify that our formula $b$ 
satisfies this criteria.
\end{remark}

\begin{remark}
Let us interpret a little the result of the theorem.  In the case when our sample size $n$ tends to infinity, we have $O\left(\kappa\sqrt{np}\log\left(Q(p,\epsilon)\right)/\epsilon\right)$ complexity.
\end{remark}

\begin{proof} The proof is similar to the one for \Cref{theorem-1}. Using the same reasoning as before $f$ satisfies \eqref{str_conv}, \eqref{lipschitz}, \eqref{hessian-lipschitz} with $m = nm_g$, $M = nM_g$ and $L = nL_g$, respectively.
As in the previous proof we will represent our iterative method as a 
classic Langevin Monte-Carlo update step. 
We have that
\begin{equation}
h = \frac{\epsilon}{4\kappa L_g\sqrt{M_gp\max(p,n)}} \leq \frac{1}{nM_g},
\end{equation}
therefore \Cref{user_sec_ord} can be applied:
\begin{equation}
W_2(\nu_{K,h,b},\pi)\leq  (1-nm_gh)^KW_2(\nu_{0	,h,b},\pi) + \frac{L_ghp}{2m_g} + \frac{11}{5}\kappa h\sqrt{M_gpn}.
\end{equation}
 Let us express $b$ in terms of $\epsilon$, $p$ and $n$:
\begin{equation}
b = \frac{hn^2}{2+hn} = \frac{n^2}{\frac{2}{h} + n} =  \frac{\epsilon n^2}{8\kappa L_g\sqrt{M_gp\max(p,n)} + \epsilon n}.
\end{equation}
Thus the condition \eqref{cond-Kb} is equivalent to 
\begin{equation}
K \geq \frac{4\kappa L_g\sqrt{M_gp\max(p,n)}} {m_gn\epsilon} \cdot {\log\left(Q''(p,\epsilon)\right)}
 = \frac{ \log\left(Q''(p,\epsilon)\right)}{m_gnh}.
\end{equation}
From  the analysis shown above, this yields that 
\begin{equation}
(1-nm_gh)^KW_2(\nu_{0,h,b},\pi) \leq 0.3\epsilon.
\end{equation}

Let us proceed to the second component, $\nicefrac{L_ghp}{2m_g}$. From the formula of $h$, which is given in the statement of the theorem, 
\begin{equation}
\frac{L_ghp}{2m_g} = \frac{L_gp}{2m_g}\cdot \frac{\epsilon}{4\kappa L_g\sqrt{M_gp\max(p,n)}} \leq \frac{\epsilon}{8}.
\end{equation}
The latter inequality is true, if we assume that $L_g$, $M_g$ and
 $\kappa$ are 
greater than 1. Similarly,
\begin{equation}
\frac{11M_g^{\frac32} h\sqrt{pn}}{5m_g} = \frac{11M_g^{\frac32}
\sqrt{pn}}{5m_g}\cdot \frac{\epsilon}{4\kappa L_g\sqrt{M_gp\max(p,n)}} \leq \frac{11\epsilon}{20}.
\end{equation}
Summing up these three inequalities we obtain that, $W_2(\nu_{K,h,b},\pi)\leq \epsilon$.
\end{proof}

\section{Conclusion}\label{sec:6}
In this paper we have introduced a new mathematical framework which helps 
to analyze Stochastic Gradient Descent as a sampling method, where the 
potential function is strongly convex and gradient-Lipschitz. 
Considering the particular case, where the stochastic term is a normal 
random vector with a diagonal covariance matrix, we have shown a convergence rate of 
$\tilde{O}(p/\epsilon^2)$. The latter is a massive improvement compared to the classic LMC which was giving only $\tilde{O}(np/\epsilon^2)$. In 
the case when we also assumed second-order smoothness, we have got 
$\tilde{O}(p/\epsilon^2 \bigwedge \kappa \sqrt{np}/\epsilon)$ convergence rate.

\section*{Appendix: The choice of the noise variance}
In this section we give a little insight on why and how we chose the distribution of the noise vectors in \Cref{sec:4}. 
Suppose we have a set of $n$ numbers $A = \{a_1,a_2,\ldots,a_n\}$. A random variable $X$ 
is designed in the following way. We take a uniformly random subset $I$ of $A$ with a fixed 
size $b$ from the class $C_b$ of all subsets of fixed size $b$.  
Afterwards we calculate the value of $\frac{n}{b}\sum_{i\in I} a_i$ and assign it to $X$. 
One can easily claim that $\E[X] = \sum_{i=1}^{n}a_i$ and therefore if we assume $a_i$'s to be of the same order, then $\E[X] = O(n)$. Important detail to notice is that it does not depend on $b$. Unfortunately the order of the variance is not that easy to guess, so we will hereby calculate it.

\begin{proposition}
Let us define the variance of $X$ by $\mathbb{V}[X]$. Then
\begin{equation}
\mathbb{V}[X] =  O\left(\frac{n(n-b)}{b}\right).
\end{equation}
\end{proposition}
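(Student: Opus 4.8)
The plan is to write $X$ as a linear combination of membership indicators for the random subset $I$, and then compute the variance exactly from the first and second moments of those indicators under uniform sampling without replacement.

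First I would set $\mathbf{1}_i = \mathbf{1}\{i \in I\}$ for $i = 1, \ldots, n$, so that
\[
X = \frac{n}{b}\sum_{i=1}^n a_i \mathbf{1}_i.
\]
Because $I$ is a uniformly random subset of fixed size $b$, each index lies in $I$ with probability $\mathbb{P}(i \in I) = b/n$, and each unordered pair lies in $I$ with probability $\mathbb{P}(i \in I,\, j \in I) = \frac{b(b-1)}{n(n-1)}$ for $i \neq j$. (These immediately recover the stated $\mathbb{E}[X] = \sum_i a_i$.) From them I would read off
\[
\mathbb{V}[\mathbf{1}_i] = \frac{b}{n}\Big(1 - \frac{b}{n}\Big) = \frac{b(n-b)}{n^2}, \qquad \mathrm{Cov}(\mathbf{1}_i, \mathbf{1}_j) = \frac{b(b-1)}{n(n-1)} - \frac{b^2}{n^2} = -\frac{b(n-b)}{n^2(n-1)}.
\]

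Next I would expand the variance of the sum and collect terms. Writing $S_1 = \sum_i a_i$ and $S_2 = \sum_i a_i^2$ and using $\sum_{i \neq j} a_i a_j = S_1^2 - S_2$,
\[
\mathbb{V}\Big[\sum_i a_i \mathbf{1}_i\Big] = \frac{b(n-b)}{n^2} S_2 - \frac{b(n-b)}{n^2(n-1)}(S_1^2 - S_2) = \frac{b(n-b)}{n^2(n-1)}\big(n S_2 - S_1^2\big).
\]
Multiplying by $n^2/b^2$ and recognizing $n S_2 - S_1^2 = n\sum_i (a_i - \bar a)^2$ with $\bar a = S_1/n$ gives the exact identity
\[
\mathbb{V}[X] = \frac{n(n-b)}{b(n-1)}\sum_{i=1}^n (a_i - \bar a)^2.
\]

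Finally, invoking the standing assumption that the $a_i$ are of the same (constant) order, so that each $|a_i| = O(1)$, I would bound $\sum_i (a_i - \bar a)^2 \le \sum_i a_i^2 = O(n)$, and use $n/(n-1) = O(1)$ to conclude $\mathbb{V}[X] = O\big(n(n-b)/b\big)$, as claimed. The computation is essentially routine; the only step requiring any care is the algebraic collapse of the covariance contribution into the finite-population factor $\frac{n-b}{n-1}$, and the one genuinely necessary modeling input is the ``same order'' hypothesis, which is exactly what converts the exact variance—whose dependence on the data sits entirely in $\sum_i (a_i - \bar a)^2$—into the stated $O\big(n(n-b)/b\big)$ bound.
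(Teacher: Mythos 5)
Your proof is correct, and it reaches the stated bound by a somewhat different route than the paper, finishing in a cleaner way. The paper computes $\E[X^2]$ by brute-force enumeration over all $\binom{n}{b}$ subsets, counting how many subsets contain a given index ($C_{n-1}^{b-1}$) or a given pair ($C_{n-2}^{b-2}$), and arrives at the two-term expression $\mathbb{V}[X] = \frac{n-b}{b}\sum_i a_i^2 + \frac{b-n}{b(n-1)}\sum_{i\neq j}a_ia_j$; it then bounds the two terms separately in magnitude, using $\sum_i a_i^2 = O(n)$ and $\sum_{i\neq j}a_ia_j = O\big(n(n-1)\big)$ under the same-order hypothesis. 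Your indicator-variable calculation is the same second-moment computation in different clothing --- your $\mathbb{P}(i\in I) = b/n$ and $\mathbb{P}(i\in I, j\in I) = \frac{b(b-1)}{n(n-1)}$ are exactly the paper's ratios $C_{n-1}^{b-1}/C_n^b$ and $C_{n-2}^{b-2}/C_n^b$ --- but you carry the algebra one step further, collapsing everything into the exact finite-population-correction formula $\mathbb{V}[X] = \frac{n(n-b)}{b(n-1)}\sum_i(a_i - \bar a)^2$. That extra step buys you several things the paper's term-by-term bound hides: the variance is manifestly nonnegative; it is governed by the empirical spread of the $a_i$ rather than their raw magnitudes (in particular it vanishes when all $a_i$ coincide, which the paper's decomposition into two large terms of opposite sign obscures); and the same-order hypothesis is invoked only once, via $\sum_i(a_i-\bar a)^2 \le \sum_i a_i^2 = O(n)$ together with $n/(n-1)=O(1)$. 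The paper's route is marginally more elementary (pure counting, no covariance bookkeeping), but yours is the sharper argument, since it isolates exactly where the modeling assumption enters and retains the cancellation that the paper discards.
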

\begin{proof}
\begin{align*}
\mathbb{V}[X] =& \frac{1}{C_n^b} \sum_{I\in C_b}\left[\sum_{i\in I} a_i\right]^2 - \left(\sum_{i=1}^{n}a_i\right)^2\\
=& \frac{n^2}{b^2C_n^b} \sum_{I\in C_b}\left[\sum_{i\in I} a_i^2 + \sum_{i \neq j; i,j\in I} 2a_ia_j\right] - \left(\sum_{i=1}^{n}a_i\right)^2\\
=& \frac{n^2C_{n-1}^{b-1}}{b^2C_n^b} \sum_{i=1}^{n}a_i^2 + \frac{n^2C_{n-2}^{b-2}}{b^2C_n^b}\sum_{i \neq j} a_ia_j - \sum_{i=1}^{n}a_i^2 - \sum_{i \neq j}a_ia_j\\
=& \frac{n-b}{b} \sum_{i=1}^{n}a_i^2 + \frac{b-n}{nb-b}\sum_{i \neq  j} a_ia_j. \\
\end{align*}
We know that $\sum_{i=1}^{n}a_i^2 = O(n)$ and $\sum_{i \neq j} 2a_ia_j = O\big(n(n-1)\big)$. Therefore the order of the variance is 
\begin{equation}
O\left(\frac{n(n-b)}{b}\right).
\end{equation}
\end{proof}

\printbibliography[heading=bibintoc]

\end{document}